\newcommand{\bF}{\mathbb{F} }
\newcommand{\M}{\mathbf{\mathcal{M}} }
\newcommand{\N}{\mathbf{\mathcal{N}} }
\newcommand{\C}{\mathbf{\mathcal{C}} }
\newcommand{\D}{\mathbf{\mathcal{D}} }
\newcommand{\T}{\mathbf{\mathcal{T}} }
\newcommand{\n}{\mathfrak{n} }
\newcommand{\m}{\mathfrak{m} }
\newcommand{\R}{\mathcal{R} }
\newcommand{\Z}{\mathbb{Z} }
\newcommand{\rt}{\rightarrow}
\newcommand{\ov}{\overline}
\newcommand{\height}{\operatorname{height}}
\newcommand{\Ass}{\operatorname{Ass}}
\newcommand{\charp}{\operatorname{char}}
\newcommand{\coker}{\operatorname{coker}}
\newcommand{\Supp}{\operatorname{Supp}}
\newcommand{\Ext}{\operatorname{Ext}}
\theoremstyle{plain}
\newtheorem{theorem}{Theorem}[section]
\newtheorem{corollary}[theorem]{Corollary}
\newtheorem{lemma}[theorem]{Lemma}
\newtheorem{proposition}[theorem]{Proposition}
\theoremstyle{definition}
\newtheorem{definition}[theorem]{Definition}
\theoremstyle{remark}
\begin{document}

\title[Graded components of  local cohomology modules]{Graded components of  local cohomology modules over polynomial rings}
\author{Tony~J.~Puthenpurakal}
\date{\today}
\address{Department of Mathematics, IIT Bombay, Powai, Mumbai 400 076, India}

\email{tputhen@math.iitb.ac.in}
\subjclass{Primary 13D45, 14B15; Secondary 13N10, 32C36}
\keywords{local cohomology, graded local cohomology, Koszul cohomology, ring of differential operators}
 \begin{abstract}
Let $K$ be a field and let $R = K[X_1, \ldots, X_m]$ with $m \geq 2$. Give $R$ the standard grading. Let $I$ be a homogeneous ideal of height $g$. Assume $1 \leq g \leq m -1$.
Suppose $H^i_I(R) \neq 0$ for some $i \geq 0$. We show
\begin{enumerate}
  \item $H^i_I(R)_n \neq 0$ for all $n \leq -m$.
  \item if $\Supp H^i_I(R) \neq \{ (X_1, \ldots, X_m)\}$ then $H^i_I(R)_n \neq 0$ for all  $n \in \Z$. Furthermore if $\charp K = 0$ then $\dim_K H^i_I(R)_n$ is infinite for all $n \in \Z$.
  \item $\dim_K H^g_I(R)_n$ is infinite for all $n \in \Z$.
\end{enumerate}
In fact we prove our results for $\T(R)$ where
$\T(-)$  is a large sub class of graded Lyubeznik functors.
\end{abstract}
 \maketitle
\section{introduction}
Let $K$ be a field and let $R = K[X_1, \ldots, X_m]$ with $m \geq 2$. Give $R$ the standard grading. Let $I$ be a homogeneous ideal of height $g$. The local cohomology modules $H^i_I(R)$ are graded. In this paper we study graded components of $H^g_I(R)$. There are two special cases. The first is when $I = \m = (X_1, \ldots, X_m)$. In this case we know that $H^m_\m(R) = E(m)$ where $E$ is the $R$-module consisting of inverse polynomials in  $X_1, \ldots, X_m$. In particular we have $H^m_\m(R)_n = 0$ for $n \geq -m +1$ and $\dim_K H^m_\m(R)_n$ is finite for all $n$. The other extreme case is when $I = 0$.
Then $H^0_0(R) = R$. Trivially $H^0_0(R)_n = 0$ for $n < 0$ and $\dim_K H^0_0(R)_n$ is finite for all $n \in \Z$.
Apart from these two special cases we prove
\begin{theorem}\label{main}
Let $R = K[X_1, \ldots, X_m]$ with $m \geq 2$.  Let $I$ be a homogeneous ideal of height $g$. Assume $1 \leq g \leq m -1$.
Suppose $H^i_I(R) \neq 0$ for some $i \geq 0$. Then
\begin{enumerate}[\rm (1)]
  \item $H^i_I(R)_n \neq 0$ for all $n \leq -m$.
  \item if $\Supp H^i_I(R) \neq \{ (X_1, \ldots, X_m)\}$ then $H^i_I(R)_n \neq 0$ for all  $n \in \Z$.
  \item $\dim_K H^g_I(R)_n$ is infinite for all $n \in \Z$.
\end{enumerate}
\end{theorem}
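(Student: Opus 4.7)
My plan is to prove all three assertions simultaneously for $M := \T(R)$, where $\T$ is a graded Lyubeznik functor. The central structural input, in characteristic zero, is that $M$ is an \emph{Eulerian} graded module over the Weyl algebra $D_R = A_m(K)$: the Euler operator $\theta = \sum_{i=1}^m X_i \partial_{X_i}$ acts as $n\cdot\mathrm{id}$ on $M_n$. Eulerianness is preserved by localisation and by kernels and cokernels, so it propagates through the construction of $\T$ starting from the obvious Eulerian structure on $R$ and on $R_f$ for homogeneous $f$.

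The engine of the proof is a purely formal \emph{downward propagation}: if $M_{n-1} = 0$, then every $\partial_{X_i}$ kills $M_n$, so $\theta|_{M_n} = 0$; comparing with $\theta|_{M_n} = n\cdot\mathrm{id}$ gives $n\cdot M_n = 0$, whence $M_n = 0$ for all $n \ne 0$. Contrapositively, once $M_{n_0}\ne 0$ for some $n_0\le -1$, then $M_n\ne 0$ for every $n \le n_0$. For (1) it therefore suffices to produce a non-zero class of $M$ in degree $n_0\in[-m,-1]$, and I would pin it down at $n_0 = -m$ via graded local duality combined with Grothendieck's spectral sequence $H^p_\m(H^q_I(R))\Rightarrow H^{p+q}_\m(R)$: because $R$ is Gorenstein with canonical module $R(-m)$, the socle contribution to $M$ forced by the spectral sequence sits in degree $-m$, delivering the required class.

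For (2), the assumption $\Supp M\ne\{\m\}$ yields $M/H^0_\m(M)\ne 0$, and by graded prime avoidance a homogeneous non-zero-divisor $\ell$ on this quotient; multiplication by $\ell$ embeds $(M/H^0_\m(M))_n$ into $(M/H^0_\m(M))_{n+\deg\ell}$, extending non-vanishing upward. Combined with (1), together with the dual form of (1) applied to the holonomic (respectively $F$-finite) dual $M^\vee$ (which exchanges the graded pieces $M_n$ and $M_{-n-m}$ up to $K$-duality), this covers every $n\in\Z$. For (3), the inequality $g<m$ forces $\Supp H^g_I(R)\supsetneq\{\m\}$ because any height-$g$ minimal prime of $I$ lies in $\Ass H^g_I(R)$, so (2) gives $H^g_I(R)_n\ne 0$ for all $n$; fixing $\mathfrak{p}\in\Ass H^g_I(R)$ of height $g$, the embedding $(R/\mathfrak{p})(-d)\hookrightarrow H^g_I(R)$ produces infinite-dimensional components for $n-d\gg 0$, and transport via the Eulerian operators $\partial^\alpha$ — which are surjective on appropriate graded pieces — propagates the infinite-dimensionality to all $n$.

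\textbf{Main obstacle.} The crux is anchoring the downward propagation by producing a non-zero class in degree exactly $-m$ from the mere non-vanishing of $M = \T(R)$. The descent machinery is purely formal; it is the global input — graded local duality paired with Lyubeznik's structural results on the top local cohomology module — that forces the socle into the correct degree, and making this uniform across the whole class of graded Lyubeznik functors is where I expect the real technical labour. The positive-characteristic case must be redone through Lyubeznik's $F$-module framework, which runs in parallel but with notably more bookkeeping; likewise, the uniform infinite-dimensionality in (3) requires input beyond the Eulerian identity alone, most naturally holonomic duality for $D_R$-modules.
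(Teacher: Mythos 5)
Your Eulerian ``downward propagation'' engine is sound (it is essentially how the paper's Theorem \ref{tame-body} is proved), but the two places where you lean on it are exactly where the proposal breaks down. First, the anchor for (1): the spectral sequence $H^p_\m(H^q_I(R))\Rightarrow H^{p+q}_\m(R)$ does not force a nonzero class of $H^i_I(R)$ in degree $-m$. The abutment is $E(m)$ concentrated in cohomological degree $m$, so almost all $E_2$-terms must die, and nothing prevents $H^p_\m(H^i_I(R))=0$ for \emph{every} $p$ for the particular $i$ you care about (non-vanishing of $H^p_\m(M)$ for some $p$ fails in general for non-finitely-generated $M$, e.g.\ $M=E(R/P)$ with $P\neq\m$). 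Even when some $H^p_\m(H^i_I(R))\neq 0$, its degree $-m$ component lives in a local cohomology of $M$, not in $M$ itself. The paper anchors the argument the other way around: it proves (Lemma \ref{crucial}) that a generalized Eulerian holonomic (resp.\ $F$-finite) module with $\M_n=0$ for all $n<0$ must be free, $\M\cong R^s$; since $\T(R)$ is $J$-torsion this is impossible, so some negative component is nonzero, and the tameness/rigidity theorems \ref{tame-body}, \ref{rigid-body} finish (1) and (2). You would need to supply an argument of comparable strength; as written, the anchoring step is a gap, not technical labour.

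Part (3) contains an outright error: if $\mathfrak{p}\in\Ass H^g_I(R)$, the embedding $(R/\mathfrak{p})(-d)\hookrightarrow H^g_I(R)$ has \emph{finite-dimensional} graded components in every degree ($R/\mathfrak{p}$ is a finitely generated graded module), so it cannot ``produce infinite-dimensional components for $n-d\gg 0$''; and surjectivity of the $\partial^\alpha$ on graded pieces is unjustified (and unavailable in characteristic $p$). The paper's proof of (3) is entirely different: assuming some $\ell(\M_{n_0})<\infty$, it gets $\ell(\M_n)<\infty$ for all $n$ by \ref{len-body}, finds a linear form $X_m$ regular on $\M=H^g_I(R)$, shows via Lemma \ref{crucial} that the cokernel of $X_m$ vanishes so that $X_m\colon \M(-1)\to\M$ is bijective, and then uses Schenzel's theorem $\Ass\Ext^g_R(R/I^n,R)=\Ass\M$ together with a direct-limit/Nakayama argument to force $\Ext^g_R(R/I^n,R)=0$ for all $n$, contradicting $H^g_I(R)\neq 0$. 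Some global input of this kind (or genuine holonomic duality, carefully executed, plus a separate characteristic-$p$ argument) is unavoidable; your sketch does not yet contain it.
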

We note that Lyubeznik proved his fundamental results on local cohomology for a considerably larger class of functors which are now known as Lyubeznik functors. For definition of graded Lyubeznik functors see \ref{lyu-g}. We show
\begin{theorem}\label{main-gen}
Let $R = K[X_1, \ldots, X_m]$ with $m \geq 2$.  Let $\T$ be a graded Lyubeznik functor. Assume there exists a non-zero graded ideal $J$ of $R$ such that $\T(R)$ is $J$-torsion.  Assume $\T(R) \neq 0$. Then
\begin{enumerate}[\rm (1)]
  \item $\T(R)_n \neq 0$ for all $n \leq -m$.
  \item if $\Supp \T(R) \neq \{ (X_1, \ldots, X_m)\}$ then $\T(R)_n \neq 0$ for all  $n \in \Z$.
\end{enumerate}
\end{theorem}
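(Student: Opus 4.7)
The plan is to prove Theorem~\ref{main-gen} by case analysis on whether $\Supp \T(R) = \{\m\}$. In the non-$\m$-torsion case I will prove part~(2), which subsumes part~(1); in the $\m$-torsion case I will prove part~(1) separately.

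First, suppose $\Supp \T(R) \ne \{\m\}$. Then one can choose a graded associated prime $\mathfrak{p} \in \Ass \T(R)$ with $\mathfrak{p} \subsetneq \m$, so some variable $X_i \notin \mathfrak{p}$. Picking a homogeneous $v \ne 0$ with $\ann_R v = \mathfrak{p}$, the multiplication maps $X_i^k$ are injective on $v$ for all $k \ge 0$, producing nonzero homogeneous elements of $\T(R)$ in every degree $\ge \deg v$. To obtain nonzero components in every lower degree, I would exploit the additional operator structure that Lyubeznik's theory attaches to $\T(R)$. In characteristic zero, $\T(R)$ is a holonomic $D$-module and the partial derivatives $\partial_j$ act as degree $-1$ operators; the holonomy of $\T(R)$ combined with the positive-dimensional support of $v$ (which follows from $\mathfrak{p} \subsetneq \m$) forces the $D$-submodule generated by $v$ to meet every sufficiently negative graded piece. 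In positive characteristic, Lyubeznik's $F$-module framework plays the analogous role. Combining the upward and downward spread gives $\T(R)_n \ne 0$ for every $n \in \Z$.

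Next, suppose $\Supp \T(R) = \{\m\}$, so $\T(R)$ is $\m$-torsion. Here the key structural input is that a nonzero graded $\m$-torsion Lyubeznik-type module decomposes, as a graded $D$-module (or $F$-module), into a finite direct sum of shifts of the simple module $E := H^m_\m(R)$. Writing $\T(R) \cong \bigoplus_{i=1}^{s} E(-a_i)$ and using $E_n \ne 0 \iff n \le -m$, part~(1) reduces to the inequality $\max_i a_i \ge 0$, equivalently that the socle of $\T(R)$ contains a homogeneous element of some degree $\ge -m$. I would establish this by induction on the number of factors in a presentation $\T = H^{c_r}_{I_r} \circ \cdots \circ H^{c_1}_{I_1}$, tracking the canonical socle generator $(X_1 \cdots X_m)^{-1} \in H^m_\m(R)$ of degree $-m$ through each composition step.

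The main obstacle is the downward-spreading argument in the non-$\m$-torsion case: excluding the possibility that the cyclic $D$-submodule (resp.\ $F$-submodule) generated by $v$ vanishes in all degrees below some threshold. This requires combining holonomy (or $F$-finiteness) with the positive-dimensional support of $v$, and a uniform treatment across characteristics depends crucially on the structural properties of graded Lyubeznik functors established by Lyubeznik.
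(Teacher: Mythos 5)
Your proposal has a genuine gap at exactly the point you flag as ``the main obstacle,'' and the mechanism you propose to close it cannot work as stated. In the case $\Supp \T(R) \neq \{\m\}$ you argue that holonomy plus the positive-dimensional support of the element $v$ forces the $D$-submodule generated by $v$ to be nonzero in all sufficiently negative degrees. This is false: $R$ itself is a holonomic, generalized Eulerian $D$-module generated by the element $1$, whose annihilator is $(0)$ (maximal possible support), and yet $R_n = 0$ for all $n < 0$. Notice that your Case 1 argument never uses the hypothesis that $\T(R)$ is $J$-torsion for a \emph{nonzero} graded ideal $J$ --- but that hypothesis is precisely what excludes the free-module behaviour. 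The paper's proof runs in the opposite direction: it assumes some graded piece vanishes, invokes the rigidity results (Theorems \ref{tame-body} and \ref{rigid-body}) to conclude the module (or its quotient by $\Gamma_\m$) vanishes in \emph{all} negative degrees, and then applies Lemma \ref{crucial} to conclude the module is free, contradicting $J$-torsion. Those rigidity theorems are also what rule out the ``middle gap'' your argument leaves open: even granting nonvanishing in degrees $\geq \deg v$ and in degrees $\ll 0$, you would still need to exclude isolated vanishing degrees in between.

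Your Case 2 ($\m$-torsion) argument is also incomplete. The reduction to $\max_i a_i \geq 0$ in a decomposition $\T(R) \cong \bigoplus_i E(-a_i)$ is fine, but the proposed induction ``tracking the socle generator'' through a presentation $H^{c_r}_{I_r}\circ\cdots\circ H^{c_1}_{I_1}$ does not cover graded Lyubeznik functors built from kernels of arrows in the sequence (\ref{eq1}), and is not actually carried out. The clean statement, which the paper uses, is that the generalized Eulerian (resp.\ $F$-finite) structure forces every shift to vanish, i.e.\ $\T(R) = E(m)^s$ on the nose, so that part (1) is immediate in this case. In summary: your upward-spreading step via $X_i^k v$ is sound, but the two downward/degree-pinning steps each require the structural rigidity results of \cite{P} and \cite{P2} together with the $J$-torsion hypothesis, and neither is supplied by the proposal.
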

We note that the assumption that there exists a non-zero graded ideal $J$ such that $\T(R)$ is $J$-torsion is satisfied by a large class of graded  Lyubeznik functors. For instance it is satisfied by
the iterated local cohomology functors $H^{i_1}_{I_1}(\cdots (H^{i_r}_{I_r}(-) \cdots))$ where $I_j$ are non-zero graded ideals in $R$. It is also satisfied by $H^i_I(-)_z$ where $I$ is a non-zero graded ideal and $z \in R$ is homogeneous.

We note that if $\Supp \T(R) = \{ (X_1, \ldots, X_m)\}$ then $T(R) = E(m)^s$ for some $s \geq 1$. In this case we know the graded components of $\T(R)$.  A natural question is whether
$\dim_K \T(R)_n$ is infinite for all $n \in \Z$ if $\Supp \T(R) \neq \{ (X_1, \ldots, X_m)\}$. We prove this in characteristic zero. We show
\begin{theorem}\label{m2}
(with hypotheses as in \ref{main-gen}) Further assume $\charp K = 0$.
If \\ $\Supp \T(R) \neq \{ (X_1, \ldots, X_m)\}$ then $\dim_K \T(R)_n$ is infinite for all $n \in \Z$.
\end{theorem}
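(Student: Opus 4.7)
The plan is to exploit the $D$-module structure of $\T(R)$ in characteristic zero: reduce to simple composition factors of $\T(R)$ and then show that each such factor whose support is not $\{\m\}$ has infinite-dimensional graded components.

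In characteristic zero, $\T(R)$ is a holonomic module over the Weyl algebra $A_m(K) = K\langle X_1, \ldots, X_m, \partial_1, \ldots, \partial_m \rangle$ (by Lyubeznik's theorem, with the grading preserved since the underlying \v{C}ech construction of $\T$ is $\mathbb{G}_m$-equivariant). Moreover $\T(R)$ is \emph{Eulerian}: the Euler operator $E = \sum_i X_i\partial_i$ acts as multiplication by $n$ on $\T(R)_n$; for iterated local cohomology this is the theorem of Ma and Zhang, and the same verification applies to the graded Lyubeznik functors under consideration. A standard consequence of Eulerianness (Lagrange interpolation in $E$ extracts each homogeneous component of any element of a submodule) is that every $A_m(K)$-submodule of $\T(R)$ is automatically graded. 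Combined with holonomicity, we obtain a finite composition series $0 = M_0 \subset M_1 \subset \cdots \subset M_\ell = \T(R)$ by graded submodules whose factors $S_j := M_j/M_{j-1}$ are simple graded Eulerian holonomic $A_m(K)$-modules.

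Since $\T(R)$ is $J$-torsion, each $S_j$ is $J$-torsion, giving $\Supp S_j \subsetneq \operatorname{Spec} R$. From $\Supp \T(R) = \bigcup_j \Supp S_j \neq \{\m\}$ we extract an index $j_0$ for which $\Supp S_{j_0} = V(\mathfrak{p}_0)$ with $\mathfrak{p}_0$ a graded prime satisfying $1 \leq \dim R/\mathfrak{p}_0 \leq m-1$. Because $\dim_K \T(R)_n \geq \dim_K (S_{j_0})_n$, it suffices to prove the following key lemma: for every simple graded Eulerian holonomic $A_m(K)$-module $S$ with $1 \leq \dim R/\ann_R S \leq m-1$, one has $\dim_K S_n = \infty$ for all $n \in \Z$.

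This key lemma is the main obstacle. The Eulerian identities give $S_n = \sum_i X_i S_{n-1}$ for $n \neq 0$ and $S_n = \sum_i \partial_i S_{n+1}$ for $n \neq -m$, so the bounds $\dim_K S_n \leq m \dim_K S_{n \pm 1}$ hold away from the two barrier degrees $\{0, -m\}$ and propagate finite-dimensionality in either direction, but not across these barriers. To bridge the barriers and derive the required contradiction with $\Supp S \neq \{\m\}$, one natural route is the Proj-style correspondence under which a simple graded Eulerian holonomic $A_m(K)$-module whose support misses the cone point descends to a simple holonomic $D$-module $\widetilde S$ on a closed subvariety $\ov V \subseteq \mathbb{P}^{m-1}$ of positive dimension, with each $S_n$ computing the $n$th twisted global sections of $\widetilde S$; finite-dimensionality of all $S_n$ would then force $\widetilde S$ to be supported on a zero-dimensional subscheme of $\mathbb{P}^{m-1}$, contradicting $\dim \ov V \geq 1$. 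Alternatively, one can compare $S$ directly with $H^{\height \mathfrak{p}_0}_{\mathfrak{p}_0}(R)$, which by Theorem \ref{main}(3) already has infinite-dimensional graded components in every degree, and propagate this through holonomic duality. Making either version of this final step rigorous and uniform across the class $\T$ is the principal technical hurdle.
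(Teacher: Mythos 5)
Your reduction to graded simple composition factors is legitimate in outline (holonomic length is finite, and submodules of generalized Eulerian modules are graded), but the proof is not complete: everything hinges on your ``key lemma,'' and you explicitly leave both proposed routes to it as unresolved hurdles. Worse, the first route is not sound as stated: finite-dimensionality of all twisted global sections of a sheaf on $\mathbb{P}^{m-1}$ does not by itself force zero-dimensional support (the homogeneous coordinate ring $R/\mathfrak{p}_0$ of a positive-dimensional projective variety has finite-dimensional graded components), so any argument along those lines must use the $D$-module/holonomicity structure in an essential quantitative way, which you have not supplied. The second route (``propagate through holonomic duality'' from $H^{\height \mathfrak{p}_0}_{\mathfrak{p}_0}(R)$) is too vague to assess. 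A smaller point: you assert $\T(R)$ is Eulerian in the strict Ma--Zhang sense; for general graded Lyubeznik functors only the \emph{generalized} Eulerian property $(\mathcal{E}-n)^a=0$ is available (Eulerian modules are not closed under extensions), though this does not affect your gradedness claim.

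The missing idea is a Bernstein-dimension count, and it makes the composition-series reduction unnecessary. Pass to $\N = \M/\Gamma_\m(\M)$. Since $\M$ is $J$-torsion with $J \neq 0$, one has $0 \notin \Ass_R \N$, so there is a linear form $u \in R_1$ regular on $\N$; dually, working over $S = K[\partial_1,\ldots,\partial_m]$, if $\Gamma_{(\partial_1,\ldots,\partial_m)}(\N)$ were nonzero it would be a nonzero free $R$-module, forcing $0 \in \Ass_R \N$ --- a contradiction --- so there is also $v \in S_{-1}$ regular on $\N$. The injections $\N(-1)\xrightarrow{u}\N$ and $\N(+1)\xrightarrow{v}\N$ force $\ell(\N_n)$ to be simultaneously non-decreasing and non-increasing, hence constant, say $s$. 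Then $\N$ is generated by $\N_0$ and the Bernstein filtration $\bF_n\N_0 = \bigoplus_{i=-n}^{n}\N_i$ has dimension $(2n+1)s$, so $\N$ has Bernstein dimension $1 < m$, contradicting holonomicity (Bernstein's inequality). This is the step your proposal needs and does not contain; without it, or a completed version of one of your two sketches, the argument does not close.
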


As an easy consequence we get
\begin{corollary}
(with hypotheses as in \ref{main-gen}) Further assume $\charp K = 0$.  The following assertions are equivalent:
\begin{enumerate}[\rm (1)]
\item
$\dim_K  \T(R)_n$ is finite for some $n$.
\item
 $\Supp \T(R) = \{ (X_1, \ldots, X_m)\}$.
\end{enumerate}
\end{corollary}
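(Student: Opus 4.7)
The plan is to deduce this corollary immediately from Theorem \ref{m2} together with the structural observation stated just before Theorem \ref{m2}; no further content is needed.

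For the implication $(2) \Rightarrow (1)$, I would invoke the observation recorded in the excerpt that when $\Supp \T(R) = \{\m\}$ the module $\T(R)$ is isomorphic to $E(m)^{s}$ for some $s \geq 1$. Since each graded component of $E(m) = H^{m}_{\m}(R)$ is finite-dimensional, as recalled in the introduction, the graded components of $E(m)^{s}$, and hence of $\T(R)$, are finite-dimensional in every degree. In particular $\dim_{K} \T(R)_{n}$ is finite for some (indeed every) $n$, so (1) holds.

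For the implication $(1) \Rightarrow (2)$, I would argue by contrapositive. Assume $\Supp \T(R) \neq \{\m\}$. The standing hypothesis of Theorem \ref{main-gen} inherited by the corollary includes $\T(R) \neq 0$, and the additional assumption $\charp K = 0$ is given. Under these conditions Theorem \ref{m2} applies verbatim and yields $\dim_{K} \T(R)_{n} = \infty$ for every $n \in \Z$, contradicting the assumption that this dimension is finite for some $n$. Hence $\Supp \T(R) = \{\m\}$.

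There is essentially no obstacle: all the substantive work is packaged into Theorem \ref{m2}, while the structural fact identifying $\T(R)$ with $E(m)^{s}$ in the case $\Supp \T(R) = \{\m\}$ has been established earlier (it relies on $\T(R)$ being a graded $\m$-torsion module whose associated primes are all equal to $\m$, hence a direct sum of shifted copies of the graded injective hull of $R/\m$, with the shift $m$ forced by the canonical normalization of $H^{m}_{\m}(R)$). Given these ingredients, the corollary is a direct packaging.
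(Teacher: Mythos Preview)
Your proposal is correct and is exactly the argument the paper intends: the corollary is stated in the paper merely as ``an easy consequence'' of Theorem \ref{m2} together with the noted fact that $\T(R) \cong E(m)^s$ when $\Supp \T(R) = \{\m\}$, and you have spelled out precisely that two-line deduction.
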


We now describe in brief the contents of this paper. In section two we discuss a few preliminary results  that we need. In section three we prove a crucial Lemma that we need. In section four we give proof of Theorem \ref{main}. In the next section we give a proof of Theorem \ref{m2}.

\section{Preliminaries}
In this section we discuss some preliminary results that we need.

\s Let $R = K[X_1, \ldots, X_m]$ be standard graded. Let $I$ be a homogeneous ideal in $R$.
Then the local cohomology modules $H^i_I(R)$ are graded $R$-modules.

\s\label{lyu-g}  \textbf{Graded Lyubeznik functors:} \\
In this subsection, we define graded Lyubeznik functors.  We say $Y$ is \textit{homogeneous }closed subset of $\text{Spec}(R)$ if $Y= V(f_1, \ldots, f_s)$, where $f_i's$ are homogeneous polynomials in $R$.

We say $Y$ is a homogeneous locally closed subset of $\text{Spec}(R)$ if $Y=Y''-Y'$, where $Y', Y''$ are homogeneous closed subsets of $\text{Spec}(R)$. We have an exact sequence of graded $A_n(K)$-modules:
\begin{equation}\label{eq1} H_{Y'}^i(R) \longrightarrow H_{Y''}^i(R) \longrightarrow H_{Y}^i(R) \longrightarrow H_{Y'}^{i+1}(R).
\end{equation}

\begin{definition}
\textit{A graded Lyubeznik functor} $\mathcal{T}$ is a composite functor of the form $\mathcal{T}= \mathcal{T}_1\circ\mathcal{T}_2 \circ \ldots\circ\mathcal{T}_k$, where each $\mathcal{T}_j$ is either $H_{Y_j}^i(-)$, where $Y_j$ is a homogeneous locally closed subset of $\text{Spec}(R)$ or the kernel of any arrow appearing in (\ref{eq1}) with $Y'=Y_j'$ and $Y''= Y_j''$, where $Y_j' \subset Y_j''$ are two homogeneous  closed subsets of $\text{Spec}(R)$.
\end{definition}

\s Assume $\charp K = 0$. Consider the Weyl algebra
$$A_m(K) = K<X_1,\ldots, X_m, \partial_1, \ldots, \partial_m>.$$
 We give the grading on $A_m(K)$ as $\deg X_i = 1$ and $\deg \partial_i = -1$.  If $\T$ is a graded Lyubeznik functor then the modules  $\T(R)$  are graded holonomic $A_m(K)$-modules, see \cite{Lyu-1}. Let $\mathcal{E} = \sum_{i = 1}^{m} X_i \partial_i$ be the Eulerian operator. We say a graded $A_m(K)$-module is  generalized Eulerian if for a homogeneous $m \in M$ there exists  integer $a$ (depending on $m$) such that $(\mathcal{E} - \deg m)^am = 0$.  The  modules $\T(R)$ are generalized Eulerian, see \cite[1.7]{PS}.

 \s Assume $\charp K = p > 0$. Then we have the Frobenius map $F \colon R \rt R$ given by
 $F(r) = r^p$. There is a notion of $F$ and $F$-finite $R$-modules, see \cite{Lyu-2}.
 The  modules $\T(R)$ are graded $F$-finite $R$-modules.

\begin{theorem}\label{tame-body}[see \cite[6.1]{P}, \cite[7.2]{P2}]
 Let $R = K[X_1,\ldots, X_m]$ be standard graded with
  $K$ an infinite field.
 Let $\M = \bigoplus_{n \in \Z} \M_n$ be a graded $R$-module. If $\charp K  = p > 0$ assume  $\M$ is a graded $F_R$-finite module. If $\charp K = 0$ assume $\M$ is a graded generalized Eulerian, holonomic $A_m(K)$-module
   Then we have
\begin{enumerate}[\rm (a)]
\item
The following assertions are equivalent:
\begin{enumerate}[\rm(i)]
\item
$\M_n \neq 0$ for infinitely many $n \ll 0$.
\item
There exists $r$  such that $\M_n \neq 0$ for all $n \leq r$.
\item
$\M_n \neq 0$ for all $n \leq -m$.
\item
$\M_n \neq 0$ for some $n \leq -m$.
\end{enumerate}
\item
The following assertions are equivalent:
\begin{enumerate}[\rm(i)]
\item
$\M_n \neq 0$ for infinitely many $n \gg 0$.
\item
There exists $r$  such that $\M_n \neq 0$ for all $n \geq r$.
\item
$\M_n \neq 0$ for all $n \geq 0$.
\item
$\M_n \neq 0$ for some $n \geq 0$.
\end{enumerate}
\end{enumerate}
\end{theorem}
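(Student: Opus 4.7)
The plan is to reduce the four-way equivalence in each part to a single substantive implication and then to establish that implication by exploiting the operator algebra acting on $\M$: the Weyl algebra $A_m(K)$ in characteristic zero, and the Frobenius structure in characteristic $p$. For part (a), the chain (iii) $\Rightarrow$ (ii) $\Rightarrow$ (i) $\Rightarrow$ (iv), together with the independent implication (iii) $\Rightarrow$ (iv), is elementary: take $r = -m$ to get (iii) $\Rightarrow$ (ii); the set $\{n \in \Z : n \leq r\}$ is infinite, so (ii) $\Rightarrow$ (i); and infinitely many $n \ll 0$ must include some $n \leq -m$, giving (i) $\Rightarrow$ (iv). The equivalence therefore collapses to (iv) $\Rightarrow$ (iii): if $\M_{n_0} \neq 0$ for some $n_0 \leq -m$, then $\M_n \neq 0$ for every $n \leq -m$. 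Part (b) is dual, and its core implication reads: if $\M_{n_0} \neq 0$ for some $n_0 \geq 0$, then $\M_n \neq 0$ for every $n \geq 0$.

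In characteristic zero I would prove both core implications via two observations about a nonzero homogeneous element $v \in \M_n$. First, if every $\partial_i$ annihilates $v$, then $\mathcal{E} v = 0$; combined with the generalized Eulerian identity $(\mathcal{E} - n)^a v = 0$, this forces $(-n)^a v = 0$, hence $n = 0$. Second, if every $X_i$ annihilates $v$, the commutation $\partial_i X_i = X_i \partial_i + 1$ yields $X_i \partial_i v = -v$ for each $i$; summing gives $\mathcal{E} v = -mv$, and the generalized Eulerian identity then forces $n = -m$. Contrapositively, any nonzero homogeneous $v \in \M_n$ with $n \neq 0$ produces a nonzero element of $\M_{n-1}$ via some $\partial_i$, and any such $v$ with $n \neq -m$ produces a nonzero element of $\M_{n+1}$ via some $X_j$. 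Starting from $v_0 \in \M_{n_0} \setminus \{0\}$ with $n_0 \leq -m$, iterated applications of suitable $\partial_i$'s yield nonzero elements of $\M_n$ for every $n \leq n_0$ (each such degree is strictly negative), and if $n_0 < -m$, iterated applications of suitable $X_j$'s fill in the degrees $n_0 < n \leq -m$ (at every intermediate step the current degree is strictly less than $-m$). This establishes (iv) $\Rightarrow$ (iii) for part (a); part (b) follows by the symmetric argument, with the roles of the two obstructions swapped.

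The main obstacle I anticipate is the characteristic $p > 0$ case, where the Weyl algebra is unavailable and one must work inside the theory of graded $F$-finite $R$-modules. The natural strategy is to exploit the Frobenius-type structure morphisms of $\M$, which relate graded components of widely differing degrees via the Frobenius exponent, and then to use the structural description of graded $F$-finite modules to extract a cofinite range of degrees in which nonvanishing propagates. The distinguished thresholds $-m$ in (a) and $0$ in (b) should emerge from the shifts intrinsic to the canonical $F$-structures on $R$ and on its injective hull $E(m) = H^m_\m(R)$. Pinning these thresholds down precisely, and matching them with the characteristic zero output to obtain the stated uniform result, is the delicate piece.
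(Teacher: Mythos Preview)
The paper does not prove this theorem: it is imported verbatim from the references \cite[6.1]{P} and \cite[7.2]{P2}, so there is no in-paper argument to compare your proposal against.

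On its own merits, your characteristic zero argument is correct and is in fact essentially the one used in \cite{P}: the two observations that a homogeneous $v$ annihilated by all $\partial_i$ must sit in degree $0$, and that a homogeneous $v$ annihilated by all $X_i$ must sit in degree $-m$, are exactly the content of the relevant lemmas there, and your step-by-step propagation up and down is the intended use. So for $\charp K = 0$ your proof of both (a) and (b) is complete.

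The genuine gap is the characteristic $p$ case. What you have written is not a proof but a description of where the proof ought to come from, and the description is too vague to succeed. The actual argument in \cite{P2} does not work by reading off the thresholds $-m$ and $0$ from the $F$-structures on $R$ and $E(m)$; it proceeds by induction on $m$ via Koszul homology. One shows that $H_l(X_m,\M)$ is a graded $F_{R/(X_m)}$-finite module (this is the result you see quoted as Lemma \ref{sushil}(1)), and then uses the long exact sequence of Koszul homology together with the base case $m=1$ (Lemma \ref{sushil}(2), where $H_l(X_1,\M)$ is concentrated in degree $0$) to push nonvanishing across the required range of degrees. Your Frobenius-morphism idea, which compares $\M_n$ with $\M_{pn}$ or similar, would at best give nonvanishing along a geometric progression of degrees, not the full interval $n \leq -m$; it does not by itself produce the precise thresholds. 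If you want a self-contained proof in characteristic $p$, you should replace your last paragraph with the Koszul-homology induction just described.
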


We also need the following
\begin{theorem}\label{rigid-body}[see \cite[6.2]{P}, \cite[7.3]{P2}]
 (with assumptions as in \ref{tame-body} with $m \geq 2$)
The following assertions are equivalent:
\begin{enumerate}[\rm(i)]
\item
$\M_n \neq 0$ for all $n \in \Z$.
\item
$\M_n \neq 0$ for some $n $ with $-m < n < 0$.
\end{enumerate}
\end{theorem}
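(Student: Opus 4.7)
The implication (i) $\Rightarrow$ (ii) is immediate: since $m \geq 2$, the interval $(-m,0)$ contains $n=-1$, so any witness for (i) at $n=-1$ already satisfies (ii).

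For (ii) $\Rightarrow$ (i), the plan is a direct propagation argument starting from a homogeneous element $u \in \M_{n_0}\setminus\{0\}$ with $-m < n_0 < 0$. I would treat the characteristic zero case first, where the generalized Eulerian hypothesis combined with the commutation $[\partial_i, X_i] = 1$ yields a sharp criterion. Indeed, if $u \in \M_n$ is homogeneous and $X_i u = 0$ for every $i$, then
\[
\mathcal{E} u \;=\; \sum_{i=1}^{m} X_i \partial_i u \;=\; \sum_{i=1}^{m} (\partial_i X_i - 1)\, u \;=\; -m\, u,
\]
so $(\mathcal{E} - n)^a u = (-1)^a(m+n)^a u$ for all $a \geq 1$, and the generalized Eulerian relation $(\mathcal{E}-n)^a u = 0$ forces $n = -m$. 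Symmetrically, if $\partial_i u = 0$ for every $i$, then $\mathcal{E} u = 0$, so $(\mathcal{E} - n)^a u = (-n)^a u$, forcing $n = 0$. Consequently, for any non-zero homogeneous $u \in \M_n$ with $n \neq -m$ some $X_i u$ is non-zero in $\M_{n+1}$, and with $n \neq 0$ some $\partial_j u$ is non-zero in $\M_{n-1}$.

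Applying this to $u \in \M_{n_0}$ with $-m < n_0 < 0$, the upward iteration by suitable $X_i$'s never visits the forbidden degree $-m$ (the degree only increases from $n_0 > -m$), so $\M_n \neq 0$ for all $n \geq n_0$; the downward iteration by suitable $\partial_j$'s never visits the forbidden degree $0$ (the degree only decreases from $n_0 < 0$), so $\M_n \neq 0$ for all $n \leq n_0$. The union is all of $\Z$, yielding (i).

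In positive characteristic, I would pursue the analogous strategy within the graded $F$-finite formalism, with Frobenius-type operations playing the role of $X_i$ and $\partial_j$. The main obstacle I anticipate here is that a Frobenius twist rescales degrees by a factor of $p$ rather than shifting by $\pm 1$, so the propagation is less direct. In that case I would combine a Frobenius argument producing non-vanishing in each tail with Theorem \ref{tame-body} to extend across $\{n \leq -m\}$ and $\{n \geq 0\}$, and argue separately, again anchoring on the given non-zero element of $\M_{n_0}$, to cover the finitely many remaining degrees inside $(-m,0)$.
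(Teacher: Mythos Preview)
The paper does not prove this theorem here; it is quoted from \cite[6.2]{P} and \cite[7.3]{P2}, so there is no in-paper argument to compare against directly.

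Your characteristic-zero argument is correct and clean: the identities $\mathcal{E}u = -mu$ (when all $X_i u = 0$) and $\mathcal{E}u = 0$ (when all $\partial_i u = 0$), together with the generalized Eulerian condition, force the only possible obstruction degrees to be $-m$ and $0$, and the two-sided propagation from a starting degree strictly between them never meets either.

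Your characteristic-$p$ sketch, however, is not a proof and has a genuine gap. A graded $F_R$-finite module carries no degree-$(-1)$ operators analogous to the $\partial_i$, so the downward propagation step has no evident substitute; Frobenius rescales degrees by $p$ rather than shifting by $\pm 1$, as you yourself note, and does not by itself produce nonvanishing in each individual degree of $(-m,0)$. Your line ``argue separately \ldots\ to cover the finitely many remaining degrees inside $(-m,0)$'' is precisely the content of the theorem in that range and is left unaddressed. The proofs in the cited references proceed instead by induction on $m$ via the Koszul homology modules $H_l(X_m,\M)$ over $K[X_1,\ldots,X_{m-1}]$ (this is exactly what Lemma~\ref{sushil}(1) is set up for), reducing ultimately to the one-variable concentration statement of Lemma~\ref{sushil}(2). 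That inductive route works uniformly in both characteristics and is what you should pursue to complete the positive-characteristic case, rather than searching for a Frobenius surrogate for $\partial_i$.
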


We  also need the following result.
 Set $\ell(-) = \dim_K(-)$.
\begin{theorem}\label{len-body}[see \cite[9.4]{P}, \cite[7.4]{P2}]
 (with assumptions as in \ref{tame-body})
The following assertions are equivalent:
\begin{enumerate}[\rm(i)]
\item
$\ell(\M_n) < \infty$ for all $n \in \Z$.
\item
$\ell(\M_j) < \infty $ for some $j \in \Z $.
\end{enumerate}
\end{theorem}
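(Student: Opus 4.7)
The implication (i) $\Rightarrow$ (ii) is trivial, so I concentrate on (ii) $\Rightarrow$ (i). The plan has two stages: a Koszul-type propagation that establishes $\dim_K \M_n < \infty$ away from the two ``forbidden'' indices $0$ and $-m$, followed by a socle-type argument that closes these remaining indices using the finite-length property of $\m$-torsion parts.

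For the propagation, work first in characteristic zero; the characteristic $p$ case runs analogously via the $F$-finite structure. Since $\M$ is generalized Eulerian, $\mathcal{E} = \sum_i X_i \partial_i$ has generalized eigenvalue $n$ on $\M_n$, so $\mathcal{E}$ is invertible on $\M_n$ whenever $n \neq 0$, and $\mathcal{E} + m = \sum_i \partial_i X_i$ is invertible on $\M_n$ whenever $n \neq -m$. This yields Koszul-type injections $(\partial_1, \ldots, \partial_m) \colon \M_n \hookrightarrow \M_{n-1}^m$ for $n \neq 0$ and $(X_1, \ldots, X_m) \colon \M_n \hookrightarrow \M_{n+1}^m$ for $n \neq -m$, hence the linear bounds $\dim_K \M_n \leq m \cdot \dim_K \M_{n \pm 1}$. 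Iterating these from $\dim_K \M_j < \infty$ propagates finite-dimensionality to all of $\Z$ except possibly $\{0, -m\}$, depending on the position of $j$.

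To close the remaining gap at $n = -m$, observe that the kernel of $(X_1, \ldots, X_m) \colon \M_{-m} \to \M_{-m+1}^m$ is the socle $\{v \in \M_{-m} : \m v = 0\}$, which lies inside $\Gamma_\m(\M)$. Since $\M$ is holonomic, $\Gamma_\m(\M)$ has finite length in the category of holonomic $A_m(K)$-modules; every simple $\m$-torsion object is, up to graded shift, the injective hull $E = H^m_\m(R)(-m)$, and each such summand has finite-dimensional graded pieces, so the socle is finite-dimensional. Combined with the finite-dimensional image (inherited from $\dim_K \M_{-m+1} < \infty$ via propagation), this yields $\dim_K \M_{-m} < \infty$. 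The gap at $n = 0$ is closed symmetrically: the kernel of $(\partial_1, \ldots, \partial_m) \colon \M_0 \to \M_{-1}^m$ is identified with $\Hom_{A_m(K)}(R, \M)_0$, which is finite-dimensional because $\Hom$ between holonomic $A_m(K)$-modules is finite-dimensional. The principal obstacle is establishing these two socle-finiteness statements; both rest essentially on finite-length and $\Hom$-finiteness properties in the relevant module category (holonomic in characteristic zero, $F$-finite in characteristic $p$). Once the gaps are handled, a final round of Koszul propagation from the now-bounded indices yields $\dim_K \M_n < \infty$ for every $n \in \Z$.
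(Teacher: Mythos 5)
The paper does not actually prove Theorem \ref{len-body}; it imports it from \cite[9.4]{P} and \cite[7.4]{P2}, where the two characteristics are handled by genuinely different arguments (roughly, induction on $m$ via Koszul homology of a general linear form, powered by the concentration results recalled here as Lemma \ref{sushil}). Your characteristic-zero argument is a legitimate and essentially correct alternative: the factorizations of $\mathcal{E}=\sum_i X_i\partial_i$ through $\M_{n-1}^m$ and of $\mathcal{E}+m=\sum_i\partial_iX_i$ through $\M_{n+1}^m$, combined with local nilpotency of $\mathcal{E}-n$ on $\M_n$, do give the injections and hence $\dim_K\M_n\leq m\dim_K\M_{n\pm1}$ away from $n=0$ and $n=-m$; and your two closing steps are sound, since $\Gamma_\m(\M)$ is a holonomic $\m$-torsion module (hence a finite direct sum of shifts of $E$ by Kashiwara's equivalence, with finite-dimensional graded pieces) and $\Hom_{A_m(K)}(R,\M)$ is finite-dimensional for holonomic $\M$. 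In fact these two observations make the Eulerian operator superfluous: $\dim_K\M_n\leq\dim_K\Gamma_\m(\M)_n+m\dim_K\M_{n+1}$ and $\dim_K\M_n\leq\dim_K\Hom_{A_m(K)}(R,\M)_n+m\dim_K\M_{n-1}$ hold for \emph{every} $n$, which propagates finiteness in both directions with no forbidden indices.

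The genuine gap is characteristic $p$. There the hypothesis on $\M$ is only that it is a graded $F_R$-finite module: it carries no $A_m(K)$-structure, there is no Eulerian operator, no holonomicity, no Kashiwara equivalence, and no $\Hom$-finiteness theorem of the kind you invoke. Every step of your argument---the propagation inequalities and both socle-closing steps---rests on one of these, so nothing ``runs analogously via the $F$-finite structure.'' Even formally, local nilpotency of $\mathcal{E}-n$ would yield invertibility of $\mathcal{E}$ on $\M_n$ only when $p\nmid n$, leaving infinitely many barriers rather than two. The characteristic-$p$ proof in \cite{P2} is built on entirely different tools (the Frobenius functor, which rescales degrees by $p$, together with Koszul homology of $F$-finite modules). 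As written, your proposal establishes the theorem only in characteristic zero.
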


Finally we need:
 \begin{lemma}\label{sushil}
 (with assumptions as in \ref{tame-body})
 Then for $l = 0, 1$
 \begin{enumerate}[\rm (1)]
  \item
  $H_l(X_m, N)$  is generalized Eulerian $A_{m-1}(K)$-module when $\charp K = 0$ and $F_{R/(X_m)}$-finite if $\charp K = p$ ; (see \cite[5.3]{PS} when \\ $\charp K = 0$ and
  \cite[1.1]{P2} when  $\charp K = p > 0$.
  \item
  If $m = 1$ then for $l = 0, 1$,
  $H_l(X_1; M)$ is concentrated in degree $0$, i.e., $H_l(X_1; M)_j = 0$ for $j \neq 0$; see \cite[5.5]{PS}  when  $\charp K = 0$ and
  \cite[1.3]{P2} when  $\charp K = p > 0$.
  \end{enumerate}
\end{lemma}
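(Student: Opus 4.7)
The plan is to establish both parts by direct computation on the Koszul homology groups
\[
H_0(X_m; M) = M/X_m M \qquad \text{and} \qquad H_1(X_m; M) = (0 :_M X_m),
\]
equipped with the gradings coming from the Koszul complex $0 \to M(-1) \xrightarrow{X_m} M \to 0$. Since $X_1, \ldots, X_{m-1}$ and $\partial_1, \ldots, \partial_{m-1}$ all commute with $X_m$, both groups inherit natural $A_{m-1}(K)$-module structures; the substance of the lemma is to show that the generalized Eulerian property (when $\charp K = 0$) or the $F$-finiteness (when $\charp K = p > 0$) descends.

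For part (1) in characteristic zero, let $\mathcal{E}' = \sum_{i<m} X_i \partial_i$ be the Euler operator of $A_{m-1}(K)$, so that $\mathcal{E} = \mathcal{E}' + X_m \partial_m$. The calculation hinges on two elementary identities. On $H_0$, the term $X_m \partial_m$ acts as zero (its image lies in $X_m M$), so $\mathcal{E}$ and $\mathcal{E}'$ agree there; on $H_1$, the term $X_m \partial_m$ acts as $-1$, since $X_m \partial_m n = \partial_m X_m n - n = -n$ whenever $X_m n = 0$. A direct commutation check using $[\mathcal{E}, X_m] = X_m$ confirms that $\mathcal{E}$ preserves both subobjects. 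For a degree-$d$ class $\bar m \in H_0$ the generalized Eulerian relation on $M$ at once gives $(\mathcal{E}' - d)^a \bar m = 0$. For $H_1$, the Koszul shift means an element of Koszul degree $d$ has $M$-degree $d-1$; one checks $(\mathcal{E} - (d-1)) n = (\mathcal{E}' - d) n$ and iterates to obtain $(\mathcal{E}' - d)^a n = 0$. In positive characteristic the strategy is parallel, with the Frobenius replacing the Euler operator: the identity $F(X_m u) = X_m^p F(u) \in X_m M$ shows that $F$ descends to $H_0$ and restricts to $H_1$, and the structure maps witnessing $F_R$-finiteness of $M$ induce analogous structure maps over $R/(X_m)$.

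For part (2), when $m = 1$ the ring $A_{m-1}(K) = K$ carries no Euler operator, so the same computation specializes to a vanishing statement. A degree-$d$ class $\bar m \in M/X_1 M$ satisfies $(\mathcal{E} - d)^a \bar m = 0$, and setting $\mathcal{E}' = 0$ gives $(-d)^a \bar m = 0$; hence $\bar m = 0$ unless $d = 0$. The same argument applied to $H_1(X_1; M)$ with its shifted grading yields $(-d)^a n = 0$. In positive characteristic, the Frobenius version of this computation gives the analogous vanishing.

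The main obstacle is threading the degree-shift bookkeeping through the computation for $H_1$, so that the $-1$ contributed by $X_m \partial_m$ combines with the $-1$ built into the Koszul shift to produce the generalized Eulerian relation at the correct eigenvalue. Once this alignment is secure, both parts reduce to the single commutation $[\partial_m, X_m] = 1$ and its Frobenius analogue.
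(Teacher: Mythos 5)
The paper offers no proof of this lemma at all: it is quoted from \cite[5.3, 5.5]{PS} (characteristic $0$) and \cite[1.1, 1.3]{P2} (characteristic $p$), so any argument you give is necessarily "different" in form. Your characteristic-zero half is correct and is in fact the same Euler-operator computation as in the cited source: $\mathcal{E}=\mathcal{E}'+X_m\partial_m$, the operator $X_m\partial_m$ vanishes on $M/X_mM$ and acts as $-1$ on $(0:_M X_m)$, and the Koszul shift $H_1(X_m;M)_d=(0:_M X_m)\cap M_{d-1}$ converts $(\mathcal{E}-(d-1))^a n=0$ into $(\mathcal{E}'-d)^a n=0$; specializing to $m=1$ with $\mathcal{E}'=0$ gives $(-d)^a=0$ on both homologies and hence concentration in degree $0$. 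That bookkeeping is exactly right.

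The characteristic-$p$ half has a genuine gap. An $F$-finite (graded) module is not equipped with an element-wise Frobenius $F\colon M\to M$ satisfying $F(X_mu)=X_m^pF(u)$; it carries a structure isomorphism $\theta\colon M\to F_R^*M$, where $F_R^*$ is the Peskine--Szpiro functor. So "$F$ descends to $H_0$ and restricts to $H_1$" is not a computation one can do on elements. The correct route is functorial: one needs the natural identification $F_R^*(-)\otimes_R R/(X_m)\cong F_{R/(X_m)}^*\bigl(-\otimes_R R/(X_m)\bigr)$ together with the flatness of Frobenius over the regular ring $R$ (Kunz), which makes $F_R^*$ exact and hence commute with Koszul homology, so that $\theta$ induces structure isomorphisms $H_l(X_m;M)\to F_{R/(X_m)}^*H_l(X_m;M)$; one must also check the generating-morphism/cofinite-type condition passes to the quotient. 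Worse, your part (2) in characteristic $p$ cannot be "the Frobenius version of this computation": there is no Euler operator, and the actual mechanism is entirely different --- $H_l(X_1;M)$ is a finite-dimensional graded $F$-finite module over $K$, the functor $F^*$ multiplies degrees by $p$, and a finite set of degrees stable under multiplication by $p$ must be $\{0\}$. That argument is absent from your write-up and needs to be supplied (or the char-$p$ statements should simply be cited, as the paper does).
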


\s Let $A$ be a Noetherian ring, $I$ an ideal in $A$ and let $M$ be an $A$-module, not necessarily finitely generated.
Set
\[
\Gamma_I(M) = \{ m \in M \mid I^sm = 0 \ \text{for some} \ s \geq 0 \}.
\]
 The following result is well-known.
\begin{lemma}\label{mod-G}[with hyotheses as above]
\[
\Ass_A \frac{M}{\Gamma_I(M)} = \{ P \in \Ass_A M \mid P \nsupseteq I \}.
\]
\end{lemma}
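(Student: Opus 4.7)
The plan is to prove the two containments separately; the essential structural input throughout is Noetherianity of $A$, which provides finite generation of $I$ (and of any annihilator ideal under consideration) and hence uniform bounds on the power of $I$ needed to annihilate a given element.

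For the containment $\{P \in \Ass_A M : P \nsupseteq I\} \subseteq \Ass_A(M/\Gamma_I(M))$, I would take $P = \ann_A m$ with $P \nsupseteq I$ and first observe that the image $\bar m$ in $M/\Gamma_I(M)$ is nonzero: otherwise $m \in \Gamma_I(M)$ would give $I^s \subseteq \ann_A m = P$ for some $s$, forcing $I \subseteq P$ by primality, a contradiction. The inclusion $P \subseteq \ann_A \bar m$ is automatic; conversely, if $b \bar m = 0$ then $bm \in \Gamma_I(M)$, so $I^t b \subseteq P$ for some $t$, and primality together with $I \nsubseteq P$ forces $b \in P$.

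For the reverse containment, let $P = \ann_A \bar m \in \Ass_A(M/\Gamma_I(M))$. To rule out $P \supseteq I$, note that $I \subseteq P$ would give $Im \subseteq \Gamma_I(M)$; writing $I = (a_1, \ldots, a_k)$ and taking the maximum $s$ of the exponents individually annihilating each $a_i m$, one obtains $I^{s+1} m = 0$, contradicting $\bar m \neq 0$. To exhibit $P$ as an element of $\Ass_A M$ itself, I would use finite generation of $P$ to choose $s$ with $I^s P m = 0$, pick $x \in I^s \setminus P$ (for instance $x = y^s$ for any $y \in I \setminus P$), and check by the same primality argument as above that $\ann_A(xm) = P$ (the inclusion $P \subseteq \ann_A(xm)$ is immediate from $I^s P m = 0$, and $b \cdot xm = 0$ pushes down to $bx \bar m = 0$, hence $bx \in P$, hence $b \in P$). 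The lemma is standard; the only technical point worth flagging is the passage from $Im \subseteq \Gamma_I(M)$ to $m \in \Gamma_I(M)$, which is precisely where Noetherianity converts a pointwise annihilating bound into a uniform one.
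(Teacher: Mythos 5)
Your proof is correct. The paper itself offers no argument here — it simply records the lemma as well-known — so there is nothing to compare against; your two-containment argument (primality of $P$ to transfer annihilation across powers of $I$ in one direction, and Noetherianity to convert the pointwise bounds $I^{s_i}a_im=0$ and $I^{t_j}p_jm=0$ into uniform ones, followed by the choice of $x\in I^s\setminus P$ with $\ann_A(xm)=P$, in the other) is exactly the standard textbook proof and is complete, including the point you flag about passing from $Im\subseteq\Gamma_I(M)$ to $m\in\Gamma_I(M)$.
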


\section{A Lemma}
In this section we prove a Lemma which is critical for us.

\begin{lemma}\label{crucial}
 Let $R = K[X_1,\ldots, X_m]$ be standard graded with
  $K$ an infinite field.
 Let $\M = \bigoplus_{n \in \Z} \M_n$ be a non-zero graded $R$-module. If $\charp K  = p > 0$ assume  $\M$ is a graded $F_R$-finite module. If $\charp K = 0$ assume $\M$ is a graded generalized Eulerian, holonomic $A_m(K)$-module.
 If $\M_n = 0$ for $n \leq -1$ then $\M = R^s$ for some $s$.
 \end{lemma}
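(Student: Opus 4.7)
I would induct on $m$, using Lemma~\ref{sushil} to reduce the number of variables and a Weyl-algebra commutator identity (or its $F$-finite analogue) to kill the $X_m$-socle in degree zero. Both the base case and the inductive step follow the same Koszul-homology pattern and conclude by graded Nakayama.

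\emph{Base case $m=1$.} By Lemma~\ref{sushil}(2) both $\M/X_1\M$ and $(0:_\M X_1)$ are concentrated in degree zero. Write $V := \M_0$; the first concentration together with $\M_n = 0$ for $n \leq -1$ yields $\M_n = X_1^n V$ for every $n \geq 0$, so the multiplication map $\phi : R \otimes_K V \to \M$ is surjective. Setting $L := \ker\phi$ and applying $- \otimes_R R/(X_1)$ to $0 \to L \to R \otimes_K V \to \M \to 0$, the Tor long exact sequence collapses (since $R \otimes_K V$ is $R$-free) to an isomorphism $L/X_1L \cong \operatorname{Tor}_1^R(\M, R/(X_1)) = (0:_\M X_1)$, using that $V \to \M/X_1\M$ is an isomorphism. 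Since $L$ is bounded below, graded Nakayama then reduces $\M \cong R^s$ (with $s := \dim_K V$) to showing $(0:_\M X_1) = 0$.

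\emph{Inductive step.} For $m \geq 2$, set $\overline{\M} := \M/X_m\M$ and $N := (0:_\M X_m)$. By Lemma~\ref{sushil}(1) these are generalized Eulerian $A_{m-1}(K)$-modules (respectively $F_{R/(X_m)}$-finite modules) over $R' := K[X_1,\ldots,X_{m-1}]$, and both vanish in degrees $\leq -1$, so the induction hypothesis applies to each. Since $\M \neq 0$, graded Nakayama forces $\overline{\M} \neq 0$, hence $\overline{\M} \cong (R')^s$ with $s \geq 1$; and if $N$ were nonzero, the induction hypothesis would give $N \cong (R')^t$ with $t \geq 1$, in particular $N_0 \neq 0$. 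Once $N = 0$ is established, lift a basis of $\overline{\M}$ to $s$ elements of $\M_0$ to obtain $\psi : R^s \to \M$; surjectivity of $\psi$ follows from $\coker(\psi)/X_m\coker(\psi) = 0$ and graded Nakayama, and injectivity follows by tensoring $0 \to \ker\psi \to R^s \to \M \to 0$ with $R/(X_m)$ and using $\operatorname{Tor}_1^R(\M, R/(X_m)) = (0:_\M X_m) = 0$ together with Nakayama. Hence $\M \cong R^s$.

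Both reductions thus rest on the single vanishing $(0:_\M X_m)_0 = 0$ (where $m$ is the variable being used up; $m=1$ in the base case). In characteristic zero this is immediate from the generalized Eulerian hypothesis: if $v \in (0:_\M X_m)_0$, then the grading convention $\deg \partial_m = -1$ places $\partial_m v \in \M_{-1} = 0$, and the Weyl-algebra relation $[\partial_m, X_m] = 1$ forces $v = \partial_m(X_m v) - X_m(\partial_m v) = 0$. I expect the main obstacle to be the positive-characteristic analogue, where this commutator identity is unavailable and the vanishing of $(0:_\M X_m)_0$ must instead be extracted from the graded $F_R$-finite structure on $\M$, presumably via a Frobenius-based identity or a structural property of $F$-finite modules supplied from Lyubeznik's theory.
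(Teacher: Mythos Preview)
Your inductive skeleton and appeal to Lemma~\ref{sushil} match the paper, but the way you establish regularity of the last variable is genuinely different. The paper never tries to show that $X_m$ itself is $\M$-regular. Instead it first invokes Theorem~\ref{len-body} (from $\M_{-1}=0$) to get $\ell(\M_n)<\infty$ for all $n$, then argues $\Gamma_\m(\M)=0$ (a nonzero $\m$-torsion summand would contain a copy of $E(m)$, impossible since $\M_{<0}=0$), so $\m\notin\Ass_R\M$; using that $K$ is infinite and $\Ass_R\M$ is finite, it then chooses a linear form $\xi$ avoiding every associated prime, changes coordinates so that $\xi=X_m$, and finishes via the auxiliary finite-generation result Proposition~\ref{fg}. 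Your approach, by contrast, feeds the first Koszul homology back into the induction hypothesis, which is more self-contained and, done correctly, does not use the infinite-field hypothesis at all.

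The ``main obstacle'' you flag in characteristic $p$ is, however, an artifact of a grading slip. In the paper's conventions the Koszul complex is $\M(-1)\xrightarrow{X_m}\M$, so the object to which Lemma~\ref{sushil}(1) applies is $H_1(X_m,\M)=(0:_\M X_m)(-1)$, not $(0:_\M X_m)$ itself; shifts do not preserve the generalized-Eulerian condition, so your invocation of the induction hypothesis on $N=(0:_\M X_m)$ is not justified as written. Work instead with $H_1(X_m,\M)$: one has $H_1(X_m,\M)_n\subseteq\M_{n-1}$, hence $H_1(X_m,\M)_n=0$ for all $n\leq 0$. If $H_1(X_m,\M)\neq 0$ the induction hypothesis would give $H_1(X_m,\M)\cong(R')^t$ with $t\geq1$, forcing a nonzero degree-$0$ piece --- contradiction. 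Thus $X_m$ is $\M$-regular in both characteristics, the commutator identity is unnecessary, and the positive-characteristic case needs nothing extra. The same correction also streamlines your base case: Lemma~\ref{sushil}(2) says $H_1(X_1,\M)$ is concentrated in degree $0$, and $H_1(X_1,\M)_0\subseteq\M_{-1}=0$, so $X_1$ is $\M$-regular immediately (this is exactly the paper's argument there). Finally, to conclude that $s$ is finite you should invoke Theorem~\ref{len-body} at the outset, as the paper does; you never establish $\dim_K\M_0<\infty$.
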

\begin{proof}
The proof of the result both when $\charp K = p$ and when $\charp K = 0$ are analogous. We will prove the result when $\charp K = p$.
We note that as $M_{-1} = 0$, it follows from \ref{len-body} that $\ell(M_n)$ is finite for all $n \in \Z$.
We prove the result by induction on $m$. We first consider the case when $m = 1$.
We have an exact sequence
\[
0 \rt H_1(X_1, \M) \rt \M(-1) \xrightarrow{X_1} \M  \rt H_0(X_1, \M) \rt 0.
\]
Note as $\M_{-1} = 0$ we get $H_1(X_1, \M)_0 = 0$. By \ref{sushil}(2) it follows that $H_1(X_1, \M) = 0$. So $X_1$ is $\M$-regular. Also as $H_0(X_1, \M)$ is concentrated in degree zero it follows that
$\M_0 = H_0(X_1, \M)_0 $ and we have isomorphisms $\M_{i - 1} \xrightarrow{X_1} \M_i$ for all $i \geq 1$. As $\M_0$ is a finite dimensional $K$-vector space it follows that
$\M = R^s$ where $s = \ell(M_0)$.

Now assume that $m  = r\geq 2$ and the result is proved when $m = r -1$.
Let $\m = (X_1, \ldots, X_m)$ and consider $\Gamma_\m(\M)$. We note that if $\Gamma_\m(\M) \neq 0$ then it is a finite direct sum of $E(m)$. But $\M_n = 0$ for $n < 0$. This implies $E(m)=0$ which is a contradiction. Thus $\Gamma_\m(\M) = 0$. It follows that $\m \notin \Ass_R \M$. Recall $\Ass_R \M$ is a finite set. Set $V =KX_1\oplus KX_2 \oplus \cdots \oplus KX_m$. As $K$ is infinite  there exists
$$ \xi \in  V \setminus \bigcup_{P \in \Ass_R \M }P\cap V. $$
By a homogeneous linear change of variables we may assume $\xi = X_m$. Set $\N = \M/X_m \M$ and $S = R/(X_m)$.  Then $\N$ is a $F_S$-finite module.
So we have an exact sequence
$$ 0 \rt \M(-1) \xrightarrow{X_m}\M \rt \N \rt 0.$$
As $\M_n = 0$ for $n < 0$ it follows that $\N_n =0$ for $n < 0$. By induction hypotheses we have $\N = S^s$ for some $s \geq 0$.
As $\ell(\M_n)$ is finite  for all $n \in \Z$ and $\M_n = 0$ for $n < 0$; it follows from \ref{fg} that $\M$ is finitely generated as a $\R$-module. Note by graded Nakayama Lemma $\N \neq 0$. So $s \geq 1$. It follows that $\M = R^s$.
\end{proof}

The following result is needed in the proof of Lemma \ref{crucial}.
\begin{proposition}
  \label{fg} Let $R = K[X_1, \ldots, X_m]$ and let $\M$ be a graded $R$-module. Assume $\M_n = 0$ for $n \leq c$ and $\ell(\M_n) $ is finite for all $n \in \Z$. Suppose there exists $x \in R_1$ such that $\N = \M/x \M$ is a finitely generated $S = R/(x)$-module. Then $\M$ is finitely generated as a $R$-module.
\end{proposition}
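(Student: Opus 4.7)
The plan is to view this as a graded Nakayama-type statement: boundedness below of $\M$ plus the finite generation of $\M/x\M$ should force $\M$ itself to be finitely generated, regardless of the finite-length hypothesis on components. The finite-length assumption looks auxiliary here — it appears to be inherited only because the proposition is cited from within the proof of Lemma \ref{crucial}.

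First I would pick finitely many homogeneous elements $\bar m_1,\dots,\bar m_t \in \N$ whose images generate $\N$ as an $S$-module, and lift them to homogeneous elements $m_1,\dots,m_t \in \M$ of the same respective degrees $d_1,\dots,d_t$. Let $M' = Rm_1 + \cdots + Rm_t \subseteq \M$. By construction the composite $M' \hookrightarrow \M \twoheadrightarrow \N$ is surjective, which gives the fundamental relation
\[
\M = M' + x\M.
\]
Substituting this into itself and using $xM' \subseteq M'$, an easy induction yields
\[
\M = M' + x^k\M \quad \text{for every } k \geq 1.
\]

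Next I would use the boundedness below of $\M$ to terminate the iteration. Fix a homogeneous element $y \in \M_n$. Applying the identity $\M = M' + x^k\M$ in degree $n$, and collecting homogeneous components (recall $\deg x = 1$), I can write
\[
y \;=\; \sum_{i=1}^{t} r_i\, m_i \;+\; x^k z,
\]
with $r_i \in R_{n-d_i}$ and $z \in \M_{n-k}$. Choosing $k$ so that $n-k \leq c$ — for instance $k = \max(1,\,n-c+1)$ — the hypothesis $\M_{n-c-\text{anything}} = 0$ forces $z = 0$, and therefore $y \in M'$. Since every homogeneous element of $\M$ lies in $M'$, I conclude $\M = M'$, which is finitely generated over $R$.

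The whole argument is essentially graded Nakayama in the bounded-below setting; the only point that needs a small amount of care is step two, where one has to be explicit that the decomposition $y = \sum r_i m_i + x^k z$ can be taken homogeneous (by projecting everything to the degree-$n$ component, which makes sense because $\M$, $R$, and the $m_i$ are all graded and $x$ is homogeneous of degree $1$). There is no serious obstacle; the finite-length hypothesis is never used, and finite generation of $\N$ as an $S$-module is what supplies the finitely many $m_i$'s to begin with.
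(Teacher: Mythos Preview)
Your argument is correct, and it is in fact a bit sharper than the paper's: you run the standard graded Nakayama iteration $\M = M' + x^k\M$ with $M'$ the submodule generated by lifts of a finite generating set of $\N$, and then kill the $x^k\M$ term using bounded-belowness. As you observe, the hypothesis $\ell(\M_n) < \infty$ never enters.

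The paper's own proof is organized differently. Instead of taking $M'$ generated by lifts of the $u_i$, it fixes $a \geq \max_i \deg u_i$ and sets $D$ equal to the $R$-submodule generated by \emph{all} of $\M_{c+1},\ldots,\M_a$; this is where the finite-length hypothesis is actually used, to guarantee $D$ is finitely generated. The paper then proves $D_n = \M_n$ by induction on $n$: for $m \in \M_r$ with $r>a$ one writes $m = w + xd$ with $w$ a combination of the lifted $v_i$'s (hence in $D$) and $d \in \M_{r-1} = D_{r-1}$ by induction. So both proofs ultimately rest on the relation $\M = (\text{f.g.\ submodule}) + x\M$, but the paper unwinds it degree by degree while you iterate it globally to $M' + x^k\M$. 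Your route buys a strictly stronger statement (no finite-length assumption needed) and a shorter argument; the paper's route is a direct degree-wise verification that makes the role of the bound $a$ on generator degrees explicit.
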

\begin{proof}
Suppose $\N = (u_1, \ldots, u_l)$ and assume $u_i$ is homogeneous with $\deg u_i \leq a$. Let $D = $ the submodule of $\M$ generated by $ \M_{c+1}, \cdots, \M_{a-1}, \M_{a}$. Then $D$ is a finitely generated $R$-submodule of $\M$. We prove by induction on $n \geq c$ that $D_n = \M_n$. By construction the result holds for $n \leq a$. Now assume $n = r > a$ and the result holds when $n < r$. Let $m \in \M_r$. Let $\ov{m}$ denote the image of $m$ in $\N$. By assumption $\ov{m} = \sum_{k =1}^{l}\ov{\alpha_i} u_i$ where $\alpha_i \in R$ is homogenous. Choose $v_i \in \M$ homogeneous with $\ov{v_i} = u_i$. Notice $v_i \in D$. Set $w = \sum_{k =1}^{l}\alpha_i v_i$. Note $w \in D$ and $\ov{w} = \ov{m}$. Thus $m = w + xd$ where $d \in \M_{r-1} = D_{r-1}$. It follows that $m \in D_r$.
\end{proof}

\section{Proof of Theorem \ref{main} and \ref{main-gen}}
In this section we first give:
\begin{proof}[Proof of Theorem \ref{main-gen}]
We may assume $K$ is infinite.
Set $\M = \T(R) \neq 0$.

(1) Suppose $\M_n = 0$ for some $n \leq -m$. Then by \ref{tame-body} we have $\M_n = 0$ for $n \leq -m$. By \ref{rigid-body} we also have $\M_n = 0$  for $-m < n < 0$. Thus $\M_n = 0$ for $n < 0$.
By Lemma \ref{crucial} we get $\M = R^s$ for some $s > 0$. But $\M$ is $J$-torsion. This is a contradiction. Thus $\M_n \neq 0$ for $n \leq -m$.

(2) Suppose if possible $\M_n = 0$ for some $n \in \Z$. By \ref{rigid-body} we have $\M_n = 0$  for $-m < n < 0$. Set $\N = \M/\Gamma_\m(M)$. Then $\N$ is $F_R$-finite if $\charp K = p > 0$ and $\M$ is generalized Eulerian and holonomic if $\charp K = 0$. Thus $\Ass_R \N$ is finite. We have by \ref{mod-G} that $\m \notin \Ass_R \N$. Set $V =KX_1\oplus KX_2 \oplus \cdots \oplus KX_m$. As $K$ is infinite  there exists
$$ \xi \in  V \setminus \bigcup_{P \in \Ass_R \N }P\cap V. $$
By a homogeneous linear change of variables we may assume $\xi = X_m$.
Note we have an inclusion
\begin{equation}
  0 \rt \N(-1) \xrightarrow{X_m} \N. \tag{*}
\end{equation}
 We note that $\N_{-m +1} = 0$ (as $\N$ is a quotient of $\M$). By (*) it follows that $\N_{-m} = 0$. Iterating $\N_j = 0$ for all $j \leq -m$.
 As $\M_n = 0$  for $-m < n < 0$ and as $\N$ is a quotient of $\M$ we get that $\N_n = 0$ for  $-m < n < 0$. Thus $\N_n = 0$ for $n < 0$. By Lemma \ref{crucial} $\N = R^s$ for some $s \geq 0$.
 It follows that the exact sequence
 \[
 0 \rt \Gamma_\m(\M) \rt \M \rt \N \rt 0,
 \]
 splits. But $\M$ is $J$-torsion. So $\N = 0$. It follows that $\M = \Gamma_\m(\M)$. So $\M$ is $\m$-torsion. It follows that $\M = E(m)^r$ for some $r \geq 0$. Thus $\Supp \M = \{ \m \}$.  This is a contradiction.
 \end{proof}
Next we give
\begin{proof}[Proof of Theorem \ref{main}]
We note that (1) and (2) follow from Theorem \ref{main-gen}.\\
 (3) Set $\M = H^g_I(R)$. Suppose if possible $\ell(\M_n)$ is finite for some $n = n_0$. Then by \ref{len-body} we get that $\ell(\M_n)$ is finite for all $n \in \Z$.
 It is well-known that $\Ass H^g_I(R)  = \{ P \mid P \supseteq I \ \text{and} \ \height P = g \}$. As $1 \leq g \leq m -1$ then as argued before we may assume there exists $x \in R_1$ such that
 $x$ is $\M$-regular. After a homogeneous linear change of variables we may assume that $x = X_m$.  So we have an exact sequence
 $0 \rt \M(-1) \xrightarrow{X_m} \M \rt \C \rt 0.$ So we have
 \[
 \ell(\M_{-1}) \geq \ell(\M_{-2}) \geq \cdots \geq \ell(\M_{-n}) \geq \ell(\M_{-n-1}) \geq \cdots
 \]
 It follows that $\ell(\M_n)$ is constant for $n \ll 0$. Therefore $\C_n = 0$ for $n \ll 0$. Set $S = R/(X_m)$. Then in $\charp K = p$ we have $\C$ is $F_S$-finite. When $\charp K = 0$ we have that $\C$ is generalized Eulerian and holonomic. By \ref{tame-body} and \ref{rigid-body} we get that $\C_n = 0$ for $n < 0$.

 We claim $\C = 0$. If not then by Lemma \ref{crucial} we get $\C = S^r$ for some $r \geq 1$. But $\C$ is a submodule of $H^g_{IS}(S)$ and so is $IS$-torsion. Note $IS \neq 0$ This is a contradiction if $r \neq 0$.  Thus   $\C = 0$.
 Thus we have a bijection $ 0 \rt \M(-1) \xrightarrow{X_m} \M \rt 0$.

 We note that by \cite[1.9]{S},  $$\Ass \Ext^g_R(R/I^n, R) = \Ass \Ext^g_R(I^n/I^{n+1}, R) = \Ass \M \ \quad \text{ for all $n \geq 1$.}$$ In particular  $X_m$ is $ \Ext^g_R(R/I^n, R)$ and  $ \Ext^g_R(I^n/I^{n+1}, R)$
 regular.
 We note that we have an exact sequence $0 \rt I^{n}/I^{n+1} \rt R/I^{n+1} \rt R/I^n \rt 0$ for all $n \geq 0$. So we have an exact sequence
 $$ 0 \rt \Ext^g_R(R/I^n, R) \xrightarrow{j_n} \Ext^g_R(R/I^{n+1}, R) \rt \Ext^g_R(I^n/I^{n+1}, R), \quad \text{for all $n \geq 0$}. $$
 Set $V_n = \coker j_n$. Then note that $X_m$ is $V_n$-regular for all $n \geq 0$.

 For $n \geq 1$ we have a commutative diagram with exact rows
\[
  \xymatrix
{
 0
 \ar@{->}[r]
  & \Ext^g_R(R/I^n, R)(-1)
\ar@{->}[r]^{X_m}
\ar@{->}[d]^{j_n(-1)}
 & \Ext^g_R(R/I^n, R)
\ar@{->}[r]
\ar@{->}[d]^{j_n}
& D_n
\ar@{->}[r]
\ar@{->}[d]^{\phi_n}
&0
\\
 0
 \ar@{->}[r]
  & \Ext^g_R(R/I^{n+1}, R)(-1)
\ar@{->}[r]^{X_m}
 & \Ext^g_R(R/I^{n+1}, R)
\ar@{->}[r]
& D_{n+1}
    \ar@{->}[r]
    &0
\
 }
\]
As $X_m$ is $V_n = \coker j_n$-regular  and as $j_n$ is injective it follows that $\phi_n \colon D_n \rt D_{n+1}$ is injective for all $n$. By taking direct limits we get that $\lim D_n = 0$. It follows that $D_n  = 0$ for $n \geq 1$. By graded Nakayama's lemma we obtain $\Ext^g_R(R/I^{n}, R) = 0$ for all $n \geq 1$. This is a contradiction.
Thus $\ell(\M_n)$ is infinite for all $n \in \Z$.
\end{proof}

\section{Proof of Theorem \ref{m2}}
In this section we prove Theorem \ref{m2}. We need a few preliminaries.
Throughout this section  we assume that $\charp K = 0$.

\s \label{dual} Let $\D = A_m(K) = K<X_1, \ldots, X_m, \partial_1, \ldots, \partial_m>$ be the $m^{th}$ Weyl algebra. Consider the two subrings $R = K[X_1, \ldots, X_m]$ and $S = K[\partial_1, \ldots, \partial_m]$. We note that $E_R(K)$, the injective hull of $K$ as a $R$-module, is isomorphic to $S$. In the Weyl algebra  the $X$ and $\partial$ variables are sort of dual to each other. It is easily seen that  $E_S(K)$, the injective hull of $K$ as a $S$-module, is isomorphic to $R$. We also note that if $\M$ is a $\D$-module supported at $(\partial_1, \ldots, \partial_n)$ as an $S$-module then $\M $ is free as a $R$-module. Furthermore if $\M$ is a holonomic $\D$-module then $\Ass_S \M$ is a finite set.

We now give
\begin{proof}[Proof of Theorem \ref{m2}]
Set $\M = \T(R)$. By \ref{main-gen} we have $\M_n \neq 0$ for all $n \in \Z$. Suppose if possible $\ell(\M_n)$ is finite for some $n = n_0$. Then $\ell(\M_n)$ is finite for all $n \in \Z$, see \ref{len-body}.
Consider the exact sequence
\[
0 \rt \Gamma_\m(\M) \rt \M \rt \N \rt 0.
\]
By \ref{mod-G},  $\Ass_R \N = \Ass_R \M  \setminus \{ \m \}$.  As $\M$ is $J$-torsion it follows that if $P \in \Ass_R \M$ then $P \supseteq  J$. In particular $0 \notin \Ass_R \N$.
As argued before there exists $u \in R_1$ which is $N$-regular.

Set $\n = (\partial_1, \ldots, \partial_m) $ an ideal in $S$. We note that if $\Gamma_\n(\N) \neq 0$ then $\Gamma_\n(N) $ is a non-zero free $R$-module. Then $0 \in \Ass_R \N$; which is a contradiction. So $\Gamma_\n(\N) = 0$. Then as $\Ass_S \N$ is finite, as argued before we get $v \in S_{-1}$ which is $\N$-regular.

The injection $0 \rt \N(-1) \xrightarrow{u} \N$ yields $\ell(\N_{n-1}) \leq \ell(\N_n)$ for all $n \in \Z$.
The inclusion $0 \rt \N(+1) \xrightarrow{v} \N$ yields $\ell(\N_{n}) \leq \ell(\N_{n-1})$
for all $n \in \Z$. Thus $\ell(\N_n) = \ell(\N_{n-1})$ for all $n \in \Z$. Thus there exists $s \geq 1$ such that
$\ell(\N_n) = s$ for all $n \in \N_n$. We also have isomorphism's
 $0 \rt \N(-1) \xrightarrow{u} \N \rt 0$ and $0 \rt \N(+1) \xrightarrow{v} \N \rt 0$.

 We note that $\N$ is generated as a $\D$-module by $\N_0$. Let $\bF$ where
 $$\bF_n = \sum_{|\alpha| + |\beta| \leq n} K \underline{X}^{\alpha}\underline{\partial}^{\beta}$$
 be the Bernstein filtration of $\D$.
 Then as $\N$ is generated by $\N_0$ and as $\N_0$ is a finite dimensional $K$-vector space we get that the filtration $\Omega = \{\Omega_n \}$ where $\Omega_n = \bF_n \N_0$
 is a good filtration of $\N$.

 Notice $\Omega_n = \bF_n \N_0 = \bigoplus_{i = -n}^{n} \N_i$. So
 $\dim \Omega_n = (2n +1)s$. It follows that the Bernstein dimension of $\N$ is one.
 This is a contradiction as $\N$ being holonomic $\D = A_m(K)$-module has Bernstein dimension $m$ (which by our hypotheses is $\geq 2$).

 Thus $\ell(\M_n) $ is infinite for all $n \in \Z$.
\end{proof}


\begin{thebibliography} {99}



\bibitem {BH}  W. Bruns and J. Herzog,
Cohen-Macaulay Rings, revised edition,
Cambridge Studies in Advanced Mathematics, 39.
Cambridge University Press, 1998.


\bibitem{Lyu-1}
G.~Lyubeznik,
\emph{Finiteness Properties of Local Cohomology Modules (an Application of D-modules to Commutative Algebra)},
 Inv. Math. 113, (1993), 41–-55.

\bibitem{Lyu-2}
\bysame,
\emph{F-modules: applications to local cohomology and D-modules in characteristic $p>0$},
J. Reine Angew. Math. 491, (1997), 65–-130.




\bibitem{PS} T.~J.~Puthenpurakal and J.~Singh,
  \emph{On derived functors  of Graded local cohomology modules},
   Math. Proc. Cambridge Philos. Soc. 167 (2019), no. 3, 549–-565

\bibitem{P}
T.~J.~Puthenpurakal,
 \emph{Graded components of local cohomology modules},
  Collect. Math. 73, (2022), no. 1, 135--171.

\bibitem{P2}
  \bysame,
 \emph{Koszul homology of $F$-finite module and applications},
 eprint,  arXiv:2307.04473

 \bibitem{S}
 P.~Schenzel,
\emph{{On the use of local cohomology in algebra and geometry},
 in, Six lectures on commutative algebra ({B}ellaterra, 1996)},
 Progr. Math., 166, 241--292, Birkh\"auser, Basel, 1998


\end{thebibliography}
\end{document}